\newtheorem{thm}{Theorem}[]
\newtheorem{rem}[]{Remark}
 \newcommand{\thmref}[1]{Theorem~\ref{#1}}
\newcommand{\R}{{\mathbb R}}
\newcommand{\bee}{\begin{equation*}}
\newcommand{\eee}{\end{equation*}}
\newcommand{\be}{\begin{equation}}
\newcommand{\ee}{\end{equation}}
\def\Re{\mathop{\rm Re}}
\date{}
\begin{document}


\centerline{}

\centerline{}

\centerline {\Large{\bf Large-time behavior of solutions to evolution
problems}}

\centerline{}

\centerline{\bf {A. G. Ramm$\dag$}}

\centerline{}

\centerline{$\dag$Mathematics Department, Kansas State University,}

\centerline{Manhattan, KS 66506-2602, USA}

\centerline{(E-mail: {\tt ramm@math.ksu.edu})}

\renewcommand{\thefootnote}{\fnsymbol{footnote}}
\footnotetext[3]{Corresponding author. Email: ramm@math.ksu.edu}

\begin{abstract} Large time behavior of solutions to abstract
differential equations is studied. The results give sufficient condition
for the solution to an abstract dynamical system (evolution problem)
not to exibit chaotic behavior.
The corresponding evolution
problem is:
$$\dot{u}=A(t)u+F(t,u)+b(t), \quad t\ge 0; \quad u(0)=u_0. \qquad (*)$$
Here $\dot{u}:=\frac {du}{dt}$, $u=u(t)\in H$, $H$ is a Hilbert space,
$t\in \R_+:=[0,\infty)$,
$A(t)$ is a linear dissipative operator: Re$(A(t)u,u)\le -\gamma(t)(u,u)$,
$F(t,u)$ is a nonlinear operator,
$\|F(t,u)\|\le c_0\|u\|^p$, $p>1$, $c_0,p$ are positive constants,
$\|b(t)\|\le
\beta(t),$ $\beta(t)\ge 0$
is a continuous function.

Sufficient conditions are given for the solution $u(t)$
to problem (*) to exist for all $t\ge0$, to be bounded uniformly on
$\R_+$, and a bound on $\|u(t)\|$ is given. This bound implies
the relation $\lim_{t\to \infty}\|u(t)\|=0$ under suitable
conditions on $\gamma(t)$ and $\beta(t)$.

The basic technical tool in this work is the following
nonlinear inequality:
$$ \dot{g}(t)\leq
-\gamma(t)g(t)+\alpha(t,g(t))+\beta(t),\ t\geq 0;\quad g(0)=g_0,
$$
which holds on any interval $[0,T)$ on which $g(t)\ge 0$ exists and has
bounded derivative
from the right, $\dot{g}(t):=\lim_{s\to +0}\frac{g(t+s)-g(t)}{s}$.
It is assumed that  $\gamma(t)$,
and $\beta(t)$ are real-valued, continuous
functions of $t$, defined on
$\R_+:=[0,\infty)$,  the function $\alpha(t,g)$ is
defined for all $t\in \R_+$,  locally Lipschitz
with
respect to $g$ uniformly with respect to $t$ on any compact subsets
$[0,T]$, $T<\infty$.

If there exists a function $\mu(t)>0$,  $\mu(t)\in C^1(\R_+)$, such that
$$\alpha\left(t,\frac{1}{\mu(t)}\right)+\beta(t)\leq
\frac{1}{\mu(t)}\left(\gamma(t)-\frac{\dot{\mu}(t)}{\mu(t)}\right),\quad
\forall t\ge 0;\quad \mu(0)g(0)\leq 1,$$ then $g(t)$ exists on all of
$\R_+$, that is $T=\infty$,  and the following estimate holds:
$$0\leq g(t)\le \frac 1{\mu(t)},\quad \forall t\geq 0. $$
If $\mu(0)g(0)< 1$, then $0\leq g(t)< \frac 1{\mu(t)},\quad \forall
t\geq 0. $

\end{abstract}

{\bf Mathematics Subject Classification:} 34D05; 34D20; 47J35  \\

{\bf Keywords:} Lyapunov stability; large-time behavior; dynamical
systems; evolution problems; nonlinear inequality; differential
equations; chaotic behavior.
\newpage
\section{Introduction}
A classical area of study is stability of solutions to evolution
problems. We identify an evolution problem with an abstract
dynamical system. An evolution problem is described by an equation
\be\label{ea} \dot{u}(t)=F_1(t,u),\qquad u(0)=u_0. \ee Here $F_1: X\to
X$ is a nonlinear operator in a Banach space $X$, $\dot{u}=\dot{u}(t)=\frac{du}{dt}$. Quite often it is
convenient to assume  $X$ to be a Hilbert space $H$, because the
energy is often interpreted as a quantity $(u,u)$ in a suitable
Hilbert space. Suppose that $F_1(t,0)=0$ and $u_0=0$. Then $u=0$ is
a solution to \eqref{ea}. A. M. Lyapunov in 1892 published a classical
work on stability of motion, where he studied equation \eqref{ea} in
the case $X=\R^n$ and $F_1$ analytic function of $u$. If
$F_1(t,0)=0$, and $F_1$ is twice Fr\'echet differentiable, then one
can write $F_1(t,u)=A(t)u+F(t,u)$, where $A(t)$ is a linear operator
in $X$ and $\|F(t,u)\|=O(\|u\|^2),\,\,\, \|u\|\to 0.$ This
representation is a linearization of $F_1$ around the point $u=0$.
Lyapunov defined the notion of stability (Lyapunov stability) of the
equilibrium solution $u=0$ towards small perturbation of the data
$u_0$. He calls this solution stable (Lyapunov stable), if for any
$\epsilon>0$ there is a $\delta=\delta(\epsilon)>0$ such that
if inequality $\|u_0\|< \delta$ holds then $\sup_{t\ge 0}\|u(t)\|<
\epsilon$. Note that this definition implies the global existence of
the solution to problem \eqref{ea} for all $u_0$ in the ball
$\|u_0\|< \delta$.

 The equilibrium solution $u=0$ is unstable if
it is not Lyapunov stable. This means that there is an $\epsilon>0$
such that for any $\delta>0$ there is a $u_0$, $\|u_0\|< \delta$,
and a $t_\delta>0$ such that $\|u(t_\delta)\|\ge \epsilon$.

One can give similar definitions for stability and instability of a
solution to problem \eqref{ea} with $u_0\neq 0$. In this case one
calls the solution $u=u(t;u_0)$ stable if all the solutions $u(t;
w_0)$ to problem \eqref{ea}, with $w_0$ in place of $u_0$, exist for
all $t\ge 0$ and satisfy the inequality $\sup_{t\ge
0}\|u(t;u_0)-u(t; w_0)\|< \epsilon$ provided that $\|u_0-w_0\|<
\delta$.

A solution $u(t;u_0)$ is called asymptotically stable if it is
stable and there is a $\delta>0$ such that all the solutions $u(t;
w_0)$ with $\|u_0-w_0\|< \delta$ satisfy the relation $\lim_{t\to
\infty}\|u(t;u_0)-u(t; w_0)\|=0$.

The equilibrium solution $u=0$ is asymptotically stable if it is
stable and there is a $\delta>0$ such that all the solutions $u(t;
u_0)$ with $\|u_0\|< \delta$ satisfy the relation $\lim_{t\to
\infty}\|u(t;u_0)\|=0$.

Consider problem \eqref{ea} with $F_1(t,u)+\phi(t,u)$ in place of
$F_1(t,u)$. The term $\phi(t,u)$ is called persistently acting
perturbations. The equilibrium solution $u=0$ is called stable with
respect to persistently acting perturbations if for any $\epsilon>0$
there exists a $\delta=\delta(\epsilon)>0$ such that if
$\|\phi(t,u)\|< \delta $ and $\|u_0\|<\delta$, then $\sup_{t\ge
0}\|u(t;u_0)\|< \epsilon$.

Stability of the solutions and their behavior as $t\to \infty$ are
of interest in a study of dynamical systems. For example, if the
equilibrium solution is asymptotically stable, then it does not have
chaotic behavior.

If $A(t)=A$ is independent of time and $X=\R^n$, then Lyapunov
obtained classical results on the stability of the equilibrium
solution to problem \eqref{ea}. He assumed that $F$ is analytic
with respect to $u\in \R^n$, so that $|F(t,u)|\le c|u|^2$ in a
neighborhood of the origin, and $c>0$ is a constant. Lyapunov has
proved that {\it if the spectrum
$\sigma(A)$ of $A$ lies in the half-plane Re$z<0$, then the
equilibrium solution $u=0$ is asymptotically stable, and if at least
one eigenvalue of $A$ lies in the half-plane Re$z>0$, then the
equilibrium solution is unstable.}

If  some of the eigenvalues of $A$ lie on the imaginary axis and
$F=0$, so that problem \eqref{ea} is linear, and if the
corresponding Jordan cells consist of just one element, then the
equilibrium solution is stable. Otherwise it is unstable.

 Thus,  a necessary and sufficient condition for Lyapunov stability
of the equilibrium solution of the linear equation $\dot{u}=Au$ in
$\R^n$ is known: the spectrum of $A$ has to lie in the left complex
half-plane:  $\sigma\subset \{Z: Re z\le 0\}$,  and the Jordan cells
corresponding to purely imaginary eigenvalues of $A$ have to consist
of just one element.

 If $F\not\equiv 0$, then, in general, when the
spectrum of $A$ lies in the left half plane of the complex plane,
and some eigenvalues of $A$ lie on the imaginary axis,  the
stability cannot be decided by the linearized part $A$ of $F_1$
only. One can give examples of $A$ such that the nonlinear part $F$
can be chosen so that the equilibrium solution $u=0$ is stable, and
$F$ can also be chosen so that this solution is unstable. For
instance, consider $\dot{u}=cu^3$, where $c=const$. This equation
can be solved analytically by separation of variables. The result is
$u(t)=[u^{-2}(0)-2ct]^{-0.5}$. Therefore, if $c<0$ and $|u(0)|\leq
\delta$, $\delta>0$, then the solution exists for all $t\ge 0$, and
is asymptotically stable. But if $c>0$, then the solution blows up
at a finite time $t_b$, the blow-up time, and $t_b=[2cu^2(0)]^{-1}$.
In this case  the zero solution is unstable.

 If $A=A(t)$ the stability theory is more complicated. The
case of periodic $A(t)$ was studied much due to its importance
in many applications (see \cite{YS}).

The stability theory in infinite-dimensional spaces, for example, in
Hilbert and Banach spaces, was developed in the second half of the
20-th century, see \cite{DK} and references therein. Again, the
location of the spectrum of $A(t)$ plays an important role in this
theory.

The basic {\it novel points} of the theory presented below include
sufficient conditions for the stability and asymptotic stability of
the equilibrium solution to abstract evolution problem  \eqref{ea}
in a Hilbert space when $\sigma (A(t))$ may lie in the right
half-plane for some or all  $t>0$, but $\sup \sigma (Re A(t))\to 0$
as $t\to \infty$. Therefore, our results are new even in the finite-dimensional spaces.

The technical tool, on which our study is based, is
a new nonlinear differential inequality. The results are stated in
several theorems and illustrated by several examples. These results
are taken from the cited papers by the author (see [14]-[22]), and,
especially, from paper \cite{R605}.  In the joint papers
by the author's student  N. S.Hoang and the author one can find
various additional results on nonlinear inequalities (see [5]-[10]).
Some versions of this inequality has been used in the monographs
[12] and [13], where the Dynamical Systems Method (DSM) for solving
operator equations was developed.

The literature on stability of solutions to evolution problems and
their behavior at large times is enormous, and we refer the reader
only to the papers and books directly related to the novel points
mentioned above.

Consider an abstract nonlinear evolution problem \be\label{e1}
\dot{u}=A(t)u+F(t,u)+b(t),  \qquad \dot{u}:=\frac {du}{dt}, \ee
\be\label{e2} u(0)=u_0,\ee where $u(t)$ is a function with values in
a Hilbert space $H$, $A(t)$ is a linear bounded dissipative operator
in $H$, which satisfies inequality \be\label{e3}
\text{Re}(A(t)u,u)\leq -\gamma(t)\|u\|^2,\qquad t\geq 0;\qquad
\forall u\in H, \ee where $F(t,u)$ is a nonlinear map in $H$,
\be\label{e4} \|F(t,u)\|\leq c_0\|u(t)\|^p,\qquad p>1, \ee
\be\label{e5} \|b(t)\|\leq \beta(t), \ee $\gamma(t)>0$ and
$\beta(t)\ge 0$ are continuous function, defined on all of
$\R_+:=[0,\infty)$, $c_0>0$ and $p>1$ are constants.

Recall that a linear operator $A$ in a Hilbert space is called dissipative
if Re$(Au,u)\le 0$ for all $u\in D(A)$, where $D(A)$ is the domain of
definition of $A$. Dissipative operators are important because they
describe systems in which energy is dissipating, for example, due to
friction or other physical reasons. Passive nonlinear networks
can be described by equation \eqref{e1} with a dissipative
linear operator $A(t)$, see \cite{R129}, \cite{R118}, Chapter 3, and
\cite{T}.

Let $\sigma:=\sigma(A(t))$ denote the spectrum of the linear operator
$A(t)$,  $\Pi:=\{z: Re z<0\}$, $\ell:=\{z: Re z=0\}$, and $\rho(\sigma,
\ell)$  denote the distance between sets $\sigma$ and $\ell$.
We assume that
\be\label{e6}
\sigma\subset \Pi,
\ee
but we allow $\lim_{t\to \infty}
\rho(\sigma,\ell)=0$. This is the basic {\it novel} point in our theory.
The usual assumption in stability theory (see, e.g., \cite{DK}) is
$ \sup_{z\in \sigma} Re z\le -\gamma_0$, where $\gamma_0=const>0$.
For example, if $A(t)=A^*(t)$, where $A^*$ is the adjoint operator,
and if the spectrum of $A(t)$ consists of eigenvalues $\lambda_j(t)$,
$0\ge \lambda_j(t)\ge \lambda_{j+1}(t)$, then,
we allow $\lim_{t\to \infty} \lambda_1(t)=0$.
This is in contrast with the usual
theory, where the assumption is $\lambda_1(t)\le -\gamma_0$,
$\gamma_0>0$ is a constant, is used.

Moreover, our results cover the case, apparently not considered
earlier in the literature, when Re$(A(t)u,u)\le \gamma(t)$ with
$\gamma(t)>0$, $\lim_{t\to \infty}\gamma(t)=0$. This means that the
spectrum of $A(t)$ may be located in the half-plane Re$z\le \gamma (t)$,
where $\gamma (t)>0$, but  $\lim_{t\to \infty}\gamma(t)=0$.

Our goal is to give
sufficient conditions for the existence and uniqueness of the solution
to problem \eqref{e1}-\eqref{e2} for all $t\ge0$, that is, for global
existence of $u(t)$, for boundedness of $\sup_{t\ge0}\|u(t)\|<\infty,$
or to the relation $\lim_{t\to \infty}\|u(t)\|=0$.

If $b(t)=0$ in \eqref{e1}, then $u(t)=0$ solves equation \eqref{e1}
and $u(0)=0$. This equation is called zero solution to \eqref{e1}
with $b(t)=0$.

Recall that the zero solution to equation \eqref{e1} with $b(t)=0$
is called Lyapunov stable if for any $\epsilon>0$, however small,
one can find a $\delta=\delta(\epsilon)>0$, such that if $\|u_0\|\le
\delta$, then the solution to Cauchy problem \eqref{e1}-\eqref{e2}
satisfies the estimate $\sup_{t\ge 0}\|u(t)\|\le \epsilon$. If, in
addition, $\lim_{t\to \infty}\|u(t)\|=0$, then the zero solution to
equation \eqref{e6} is called asymptotically stable.

If $b(t)\not\equiv 0$, then one says that  \eqref{e1}-\eqref{e2} is the
problem with persistently acting perturbations.
The zero solution is called Lyapunov stable for problem
\eqref{e1}-\eqref{e2}
with persistently acting perturbations if for any $\epsilon>0$, however
small, one can find a $\delta=\delta(\epsilon)>0$, such that if
$\|u_0\|\le \delta$, and $\sup_{t\ge 0}\|b(t)\|\le\delta$, then the
solution to Cauchy problem
\eqref{e1}-\eqref{e2} satisfies the estimate $\sup_{t\ge 0}\|u(t)\|\le
\epsilon$.

We do not discuss here the method of Lyapunov functions for a study
of stability (see, for example,  \cite{Kr}, \cite{Ro}).

The approach, developed in this work, consists of reducing the
stability problems to some nonlinear differential inequality
and estimating the solutions to this inequality.

In Section 2 the formulation and a proof of two theorems, containing the
result concerning this inequality and its discrete analog, are given.
In Section 3 some results concerning Lyapunov stability
of zero  solution to equation  \eqref{e1} are obtained.
In Section 4 we derive stability results in the case when $\gamma(t)>0$.
This means that the linear operator $A(t)$ in \eqref{e1} may have spectrum
in the half-plane Re$z>0$.

Our results are
closely related to the Dynamical Systems Method (DSM), see
\cite{R485}, \cite{R577}, \cite{R585}, \cite{R606}. Recently
these results were applied to biological problems (\cite{R619}) and
to evolution equations with delay (\cite{R617}).

In the theory of chaos one of the reasons for the chaotic behavior
of a solution to an evolution problem to
appear is the lack of stability of solutions to this problem
(\cite{Da}, \cite{De}). The results presented in Section 3 can be
considered as sufficient conditions for chaotic behavior not to appear
in the evolution system described by problem \eqref{e1}-\eqref{e2}.

\section{A differential inequality}

In this Section an essentially self-contained proof is given of an
estimate for non-negative solutions of a nonlinear inequality
\be\label{e7}
\dot{g}(t)\leq -\gamma(t)g(t)+\alpha(t,g(t))+\beta(t),\ t\geq 0;\
g(0)=g_0;\quad \dot{g}:=\frac{dg}{dt}.  \ee
In Section 3 some of the many possible applications of this estimate
(see estimate  \eqref{e11} below)  are demonstrated.

It is  not assumed a priori that solutions $g(t)\ge 0$ to inequality
\eqref{e7} are defined on all of $\R_+$, that is, that  these solutions
exist globally. In Theorem 1 we give sufficient conditions for the
global existence of $g(t)$.
Moreover, under these conditions  a bound on $g(t)$ is given, see
estimate \eqref{e11} in Theorem 1.
This bound yields the relation $\lim_{t\to \infty}g(t)=0$
if $\lim_{t\to \infty}\mu(t)=\infty$ in \eqref{e11}.

Let us formulate our assumptions. We assume that $g(t)\ge 0$.
We {\it do not assume that the
functions $\gamma, \alpha$ and $\beta$ are non-negative}. However, in
many applications the functions $\alpha$ and $\beta$ are bounds on some
norms, and then these functions are non-negative. The function $\gamma(t)$
is often (but not always) non-negative. For example, this happens if  $\gamma(t)$ comes from an estimate of
the type $(Au,u)\ge \gamma (u,u)$. If the functions $\alpha$ and $\beta$
are bounds from above on some norms, then one may assume without loss of generality
that these functions are smooth, because one can approximate
a non-smooth function with an arbitrary accuracy by an infinitely smooth
function, and choose this smooth function to be greater than the function it approximates.

\noindent ${\bf Assumption\,\, A_1).}$ We assume that the function
$g(t)\geq 0$ is defined on some interval $[0,T)$, has a bounded
derivative $\dot{g}(t):=\lim_{s\to +0}\frac{g(t+s)-g(t)}{s}$ from
the right at any point of this interval, and  $g(t)$ satisfies
inequality \eqref{e7} at all $t$ at which $g(t)$ is defined. The
functions $\gamma(t)$, and $\beta(t)$,  are real-valued, defined on
all of $\R_+$ and continuous there.  The function $\alpha(t,g)$ is
continuous on $\R_+\times \R_+$ 
and locally Lipschitz with respect to $g$. This
means that
\be\label{e8}
|\alpha(t,g)-\alpha(t,h)|\leq L(T,M)|g-h|,
\ee
if $t\in[0,T]$, $|g|\leq M$ and $|h|\leq  M$. Here $M=const>0$
and $L(T,M)>0$ is a constant independent of $g$, $h$, and $t$.

\noindent ${\bf Assumption\,\, A_2).}$ There exists a $C^1(\R_+)$
function $\mu(t)>0$,  such that
\be\label{e9}
\alpha\left(t,\frac{1}{\mu(t)}\right)+\beta(t)\leq
\frac{1}{\mu(t)}\left(\gamma(t)-\frac{\dot{\mu}(t)}{\mu(t)}\right),\quad
\forall t\ge 0, \ee
and
\be\label{e10} \mu(0)g(0)\le 1. \ee

One can replace the initial point $t=0$ by some point $t_0\in \R$,
and assume that the interval of time is $[t_0, t_0+T)$, and that
inequalities hold for $t\ge t_0$, rather than for $t\ge 0$. The proofs and
the conclusions remain unchanged.

\begin{thm}\label{thm1}
If ${\it Assumptions\,  A_1)}$ and ${\it A_2)}$ hold, then any solution
$g(t)\ge 0$
to inequality \eqref{e7} exists on all of $\R_+$, i.e., $T=\infty$,
and satisfies the following estimate:
\be\label{e11}0\leq g(t)\le \frac{1}{\mu(t)}\quad \forall t\in \R_+. \ee
If $ \mu(0)g(0)< 1$, then $0\leq g(t)< \frac{1}{\mu(t)}\quad \forall
t\in \R_+.$
\end{thm}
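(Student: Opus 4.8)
The plan is to prove the bound \eqref{e11} by a comparison argument, treating $\phi(t):=1/\mu(t)$ as a barrier, and then to deduce global existence from the bound itself. The first step is to check that $\phi$ is a \emph{supersolution} of the inequality \eqref{e7}. Since $\mu\in C^1(\R_+)$ and $\mu>0$, one has $\dot\phi=-\dot\mu/\mu^2$, and a direct rearrangement shows that Assumption $A_2)$, inequality \eqref{e9}, is exactly the statement
\bee
\dot\phi(t)\ge -\gamma(t)\phi(t)+\alpha\big(t,\phi(t)\big)+\beta(t),\qquad \forall t\ge 0.
\eee
Moreover \eqref{e10} gives $g(0)\le 1/\mu(0)=\phi(0)$. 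Thus $g$ is a subsolution lying below the supersolution $\phi$ at $t=0$, and the heart of the matter is a comparison principle asserting that this ordering persists on $[0,T)$.

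Second, I would set $h(t):=g(t)-\phi(t)$, so that $h(0)\le 0$, and estimate $h$ on any compact subinterval of $[0,T)$. Note that $g$, having a bounded right derivative, is continuous and hence bounded on compacts, so the local Lipschitz bound \eqref{e8} applies with a constant $L=L(T,M)$ on the (compact) range of values of $g$ and $\phi$. Subtracting the supersolution inequality from \eqref{e7}, and using \eqref{e8} together with $h=g-\phi\ge 0$, one obtains, wherever $h(t)\ge 0$, the estimate for the right derivative
\bee
\dot h(t)\le -\gamma(t)h(t)+\big[\alpha(t,g(t))-\alpha(t,\phi(t))\big]\le \big(L-\gamma(t)\big)h(t).
\eee
Introducing the integrating factor and setting $\psi(t):=h(t)\exp\big(-\int_0^t(L-\gamma(s))\,ds\big)$, this becomes $\dot\psi(t)\le 0$ at every point where $h(t)\ge 0$.

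Third, I would invoke the elementary fact that a continuous function with non-positive right derivative on an interval is non-increasing there. Arguing by contradiction: if $h(t_2)>0$ for some $t_2\in[0,T)$, set $t^\ast:=\sup\{t\le t_2:h(t)\le 0\}$; then $h(t^\ast)=0$ and $h>0$ on $(t^\ast,t_2]$, so $\psi$ is non-increasing on $[t^\ast,t_2]$, whence $h(t_2)\le \psi(t_2)e^{(\cdots)}\le 0$, a contradiction. This yields $h(t)\le 0$, i.e.\ \eqref{e11}, on all of $[0,T)$; the strict case $\mu(0)g(0)<1$ follows by the same scheme started from $h(0)<0$. Finally, global existence $T=\infty$ is obtained from the bound: since $\phi=1/\mu$ is continuous and positive, $0\le g(t)\le\phi(t)$ keeps $g$ bounded on every finite interval, which together with the bounded right derivative precludes finite-time escape, so the maximal interval of existence is all of $\R_+$.

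The step I expect to be the main obstacle is the rigorous treatment of the one-sided derivative together with the possibility of \emph{tangential} contact of $g$ with the barrier: a naive first-contact argument comparing $\dot g(t_1)$ with $\dot\phi(t_1)$ only delivers $\dot g(t_1)\le\dot\phi(t_1)$ and does not by itself rule out an immediate crossing. The integrating-factor reformulation, combined with the monotonicity lemma for functions with non-positive right derivative, is precisely what closes this gap, and it is the step that genuinely uses the local Lipschitz continuity \eqref{e8} of $\alpha$.
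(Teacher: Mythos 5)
Your proof is correct, and it takes a genuinely different route from the paper's. The paper does not compare $g$ with the barrier $1/\mu$ directly: it introduces the auxiliary Cauchy problem $\dot\phi=-\gamma(t)\phi+\alpha(t,\phi)+\beta(t)$, $\phi(0)=g(0)$ (whose local solvability and uniqueness come from the Lipschitz condition \eqref{e8}), and then applies a general Comparison Lemma twice, sandwiching $g(t)\le\phi(t)\le\mu^{-1}(t)$; the Comparison Lemma itself is proved by perturbing the right-hand side to $f-\tfrac1n$ and passing to the limit, with maximal solutions handling possible non-uniqueness. You bypass the auxiliary ODE entirely and run a Gronwall-type argument on $h=g-1/\mu$ with the integrating factor $\exp(-\int_0^t(L-\gamma))$, closed by the first-crossing contradiction and the lemma that a continuous function with non-positive right derivative is non-increasing. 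What your approach buys is self-containment: no ODE existence theory, no maximal solutions, and a clean treatment of the fact that $g$ has only a one-sided derivative --- a point the paper glosses over when it asserts that ``an inequality, similar to \eqref{e14}, implies \eqref{e15}.'' What the paper's approach buys is modularity: the Comparison Lemma is stated and proved in generality (continuous, possibly non-Lipschitz right-hand sides) and is reusable elsewhere. Two small points you should tighten: (i) your claim that the strict case ``follows by the same scheme'' is true but not literally the same computation --- for $h<0$ one has $|h|=-h$, so the differential inequality becomes $\dot h\le -(\gamma+L)h$ wherever $h<0$, and the integrating factor $\exp(\int_0^t(\gamma+L))$ shows $h$ cannot reach zero in finite time; (ii) continuity of $g$ on compacts is implicitly assumed (as it is in the paper), since a bounded right derivative alone gives only right-continuity --- in the intended applications $g(t)=\|u(t)\|$ this is automatic.
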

\begin{rem}\label{rem1}
If $\lim_{t\to \infty} \mu(t)=\infty$, then $\lim_{t\to
\infty}g(t)=0$.
\end{rem}

\noindent {\it Proof of \thmref{thm1}.} Let us rewrite inequality
for $\mu$ as follows: \be\label{e12} -\gamma (t)\mu^{-1}(t)+\alpha(t,
\mu^{-1}(t))+\beta(t)\le \frac {d\mu^{-1}(t)}{dt}. \ee Let $\phi(t)$
solve the following Cauchy problem: \be\label{e13}
\dot{\phi}(t)=-\gamma (t)\phi(t)+\alpha(t, \phi(t))+\beta(t),\quad
t\ge 0; \quad \phi(0)=\phi_0. \ee
The assumption that $\alpha(t,g)$
is locally Lipschitz with respect to $g$ guarantees local existence and uniqueness of
the solution $\phi (t)$ to problem \eqref{e13}. From the comparison
result (see {\bf A Comparison Lemma} proved below) it follows
that \be\label{e14} \phi(t)\le
\mu^{-1}(t) \qquad \forall t\ge 0, \ee provided that $\phi(0)\le
\mu^{-1}(0)$, where $\phi(t)$ is the unique solution to problem
\eqref{e14}. Let us take $\phi(0)=g(0)$. Then  $\phi(0)\le
\mu^{-1}(0)$ by the assumption, and an inequality, similar to
\eqref{e14}, implies that \be\label{e15} g(t)\le \phi(t) \qquad t\in
[0,T). \ee Inequalities $\phi(0)\le \mu^{-1}(0)$, \eqref{e14}, and
\eqref{e15} imply \be\label{e16} g(t)\le \phi(t)\le \mu^{-1}(t),
\qquad t\in [0,T). \ee By the assumption, the function $\mu(t)$ is
defined for all $t\ge 0$ and is bounded on any compact subinterval
of the set $[0,\infty)$. Consequently, the functions $\phi(t)$ and
$g(t)\ge 0$ are defined for all $t\ge 0$, and estimate \eqref{e11}
is established.

If $g(0)< \mu^{-1}(0)$, then one obtains by a similar argument
the strict inequality $g(t)< \mu^{-1}(t), \quad t\ge 0$.

\thmref{thm1} is proved.\hfill $\Box$

Let us now prove the comparison result that was used above, see,
 for example, \cite{H}, Theorem III.4.1.
 \newpage

{\bf A Comparison Lemma.} {\it Let
$$\dot{\phi}(t)= f(t,\phi),\,
\,\, \phi(0)=\phi_0,\,\,\, (*)$$ and $$\dot{\psi}(t)= g(t,\psi), \,\,\,
\psi(0)=\psi_0. \,\,\, (**)$$
Assume $\psi_0\ge \phi_0$, and
$$g(t,x)\ge f(t,x)\qquad (***)$$ for
any $t$ and $x$ for which both $f$ and $g$ are defined. Assume that
$f$ and $g$ are continuous functions in a set $[0,s) \times (a,b)$,
 $\phi_0\in (a,b)$,   $\psi$ is the maximal solution to (**) and
$\phi$ is any solution to (*). Then $\phi(t)\le \psi(t)$ on the
maximal interval $[0,T)$ of the existence of both $\phi$ and
$\psi$.}

{\it Proof of the Comparison Lemma.} First, let us assume for
simplicity that problems (*) and (**) have a unique solution. Later
we will discard this simplifying assumption. If $f$ and $g$ satisfy
a local Lipschitz condition with respect to $\phi$, respectively,
$\psi$, then our simplifying assumption holds. Assume secondly, also
for simplicity, that $g(t,x)>f(t,x)$. Under this simplifying
assumption it is easy to prove the conclusion of the Lemma, because
the graph of $\psi$ must lie above the graph of $\psi$ for $t>0$.
Indeed, in a small neighborhood $[0,\delta)$, where $\delta>0$ is
sufficiently small, the graph of $\psi$ lies above the graph of
$\phi$. This is obviously true if $\phi_0<\psi_0$, because of the
continuity of $\phi$ and $\psi$. If $\phi_0=\psi_0$, then the graph
of $\psi$ lies above the graph of $\phi$ because
$\dot{\phi}(0)<\dot{\psi}(0)$ due to the assumption
$f(0,\phi_0)<g(0,\phi_0)=g(0,\psi_0)$. To check the last claim
assume that there is a point $t_1\in [0,T)$ such that
$\phi(t_1)=\psi(t_1)$, and $\phi(t)<\psi(t)$ for $t\in (0,t_1)$.
Then $\phi(t)-\phi(t_1)<\psi(t)-\psi(t_1)$. Divide this inequality
by $t-t_1<0$ and get
$$\frac{\phi(t)-\phi(t_1)}{t-t_1}>\frac{\psi(t)-\psi(t_1)}{t-t_1}.$$
Pass to the limit $t\to t_1$, $t<t_1$, in the above inequality, use
the differential equations for $\phi$ and $\psi$  and the equality
$\phi(t_1)=\psi(t_1)$, and obtain the following relation:
$$f(t_1,\phi(t_1))=\dot{\phi}(t_1)\ge
\dot{\psi}(t_1)=g(t_1, \psi(t_1))=g(t_1, \phi(t_1)).$$
This relation contradicts the assumption $f(t,x)<g(t,x)$. The
contradiction
proves the conclusion of the Comparison Lemma under the additional
assumption
$f(t,x)<g(t,x)$.

To prove the Comparison Lemma under the original
assumption $f(t,x)\le g(t,x)$,  let us consider problem (*) with $f$
replaced by $f_n:=f-\frac 1 n<f$. Let $\phi_n$  solve  problem (*)
with $f$ replaced by $f_n$, and with the same initial condition as
in (*). Since $f_n(t,x)<g(t,x)$, then, by what we have just proved,
it follows that $\phi_n(t)\le \psi(t)$ on the common interval
$[0,T_n)$ of the existence of $\phi_n$ and $\psi$. By the standard
result about continuous dependence of the solution to (*) on a
parameter, one concludes that $\lim_{n\to \infty} T_n=T$ and
$\lim_{n\to \infty}\phi_n(t)=\phi(t)$ for any $t\in [0,T)$.
Therefore, passing to the limit $n\to \infty$ in the inequality
$\phi_n(t)\le \psi(t)$ one gets the conclusion of the Comparison Lemma
under
the original assumption $f(t,x)\le g(t,x)$.

If the simplifying
assumption concerning uniqueness of the solutions to (*) and (**) is
dropped, then (*) and (**) may have many solutions. The limit of the
solution $\phi_n$ is the minimal solution to (*). If one considers
problem (**) with $g$ replaced by $g_n:=g+\frac 1 n>g$, and denotes
by $\psi_n$ the corresponding solution, then the limit $\lim_{n\to
\infty}\psi_n(t)=\psi(t)$ is the maximal solution to (**). In this
case the above argument yields the conclusion of the Lemma with
$\psi(t)$ being the maximal solution to (**), and $\phi(t)$ being
any solution to (*). The Comparison Lemma is proved. \hfill $\Box$

{\bf Remark 2.} {\it  If $\phi(t)$ is bounded from below for all
$t\ge 0$, so that $c\le \phi(t)$ for all $t\ge 0$, and $\psi(t)$
exists globally, that is, for all $t\ge 0$, then the inequality
$c\le \phi(t)\le \psi(t)$ and the continuity of $f(t,x)$ on the set
$[0,\infty)\times \R$ imply that any solution $\phi$ to (*) exists
globally. Indeed, if it would exist only on a finite interval
$[0,T)$ then it has to tend to infinity as $t\to T$, but this is
impossible because  the bound $c\le \phi(t)\le \psi(t)$ and the
global existence and continuity of $\psi$ do not allow $\phi(t)$ to
grow to infinity as $t\to T$.}

 Let us formulate and prove a {\it discrete version} of \thmref{thm1}.

\begin{thm}\label{thm2}
Assume that $g_n\geq 0$, $\alpha(n,g_n)\geq 0,$
\be\label{e21}
g_{n+1}\leq (1-h_n\gamma_n)g_n+h_n\alpha(n,g_n)+h_n\beta_n;\quad
h_n>0,\quad 0<h_n\gamma_n<1,\ee and $\alpha(n,g_n)\geq \alpha(n,p_n)$ if
$g_n\geq p_n$. If there exists a sequence $\mu_n> 0$ such that
\be\label{e22} \alpha(n,\frac{1}{\mu_n}) +\beta_n\leq
\frac{1}{\mu_n}(\gamma_n-\frac{\mu_{n+1}-\mu_n}{h_n\mu_n}),
\ee
and
\be\label{e23} g_0\leq \frac{1}{\mu_0}, \ee
then
\be\label{e24}
0\leq g_n\leq \frac{1}{\mu_n}, \qquad \forall n\geq 0. \ee
\end{thm}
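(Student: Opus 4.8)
The plan is to establish \eqref{e24} by induction on $n$, which is the natural discrete counterpart of the comparison argument used for \thmref{thm1}. The lower bound $0\le g_n$ is already part of the hypotheses, so only the upper estimate $g_n\le \mu_n^{-1}$ has to be proved. The base case $n=0$ is precisely assumption \eqref{e23}, namely $g_0\le \mu_0^{-1}$.

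For the inductive step I would assume $g_n\le \mu_n^{-1}$ and bound $g_{n+1}$ through the recurrence \eqref{e21}. Two observations make the substitution legitimate. First, the standing assumption $0<h_n\gamma_n<1$ guarantees $1-h_n\gamma_n>0$, so the coefficient of $g_n$ is positive and the induction hypothesis can be inserted to give $(1-h_n\gamma_n)g_n\le (1-h_n\gamma_n)\mu_n^{-1}$. Second, the monotonicity hypothesis (that $\alpha(n,g_n)\ge \alpha(n,p_n)$ whenever $g_n\ge p_n$, i.e.\ $\alpha(n,\cdot)$ is nondecreasing) together with $g_n\le \mu_n^{-1}$ yields $\alpha(n,g_n)\le \alpha(n,\mu_n^{-1})$. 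Combining these in \eqref{e21} gives
\be\label{eplan1}
g_{n+1}\le (1-h_n\gamma_n)\mu_n^{-1}+h_n\big[\alpha(n,\mu_n^{-1})+\beta_n\big].
\ee

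Next I would invoke the structural hypothesis \eqref{e22} to control the bracketed term. Multiplying \eqref{e22} by $h_n>0$ produces $h_n[\alpha(n,\mu_n^{-1})+\beta_n]\le h_n\gamma_n\mu_n^{-1}-(\mu_{n+1}-\mu_n)\mu_n^{-2}$, and substituting this into \eqref{eplan1} collapses the two $h_n\gamma_n\mu_n^{-1}$ contributions, leaving the clean bound
\be\label{eplan2}
g_{n+1}\le \mu_n^{-1}-\frac{\mu_{n+1}-\mu_n}{\mu_n^2}=\frac{2\mu_n-\mu_{n+1}}{\mu_n^2}.
\ee
The remaining task, which I regard as the only genuinely delicate point, is to verify that the right-hand side of \eqref{eplan2} does not exceed $\mu_{n+1}^{-1}$, so as to close the induction. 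Since $\mu_n,\mu_{n+1}>0$, clearing denominators shows that $\frac{2\mu_n-\mu_{n+1}}{\mu_n^2}\le \mu_{n+1}^{-1}$ is equivalent to $\mu_{n+1}(2\mu_n-\mu_{n+1})\le \mu_n^2$, i.e.\ to $(\mu_n-\mu_{n+1})^2\ge 0$, which always holds. (If $2\mu_n-\mu_{n+1}<0$ the claim is trivial, since the left side is then negative while the right side is positive.) This perfect-square identity is the discrete shadow of the derivative condition on $\mu$ in \thmref{thm1}; it completes the inductive step, and hence the proof.
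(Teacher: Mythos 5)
Your proof is correct and follows essentially the same route as the paper's: induction on $n$, insertion of the inductive hypothesis using $1-h_n\gamma_n>0$ and the monotonicity of $\alpha(n,\cdot)$, substitution of \eqref{e22}, and closure of the induction via the perfect-square inequality $(\mu_n-\mu_{n+1})^2\ge 0$. The only difference is cosmetic: you spell out the clearing of denominators in the final step a bit more carefully than the paper does.
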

\begin{proof}
For $n=0$ inequality \eqref{e24} holds because of \eqref{e23}.
Assume that it holds for all $n\leq m$ and let us check that then it holds
for $n=m+1$. If this is done, \thmref{thm2} is proved.\hfill $\Box$

Using the
inductive assumption, one gets:
\bee g_{m+1}\leq
(1-h_m\gamma_m)\frac{1}{\mu_m}+h_m\alpha(m,\frac{1}{\mu_m})+h_m\beta_m.
\eee
This and inequality \eqref{e22} imply:
\bee\begin{split}
g_{m+1}&\leq
(1-h_m\gamma_m)\frac{1}{\mu_m}+h_m\frac{1}{\mu_m}(\gamma_m-\frac{\mu_{m+1}-
\mu_m}{h_m\mu_m})\\
&=\mu_m^{-1}-\frac{\mu_{m+1}-\mu_m}{\mu_m^2}\le \mu_{m+1}^{-1}.
\end{split}\eee
The last inequality is obvious since it can be written as
$$-(\mu_m-\mu_{m+1})^2\le 0.$$
\thmref{thm2} is proved.
\end{proof}
\thmref{thm2} was formulated in \cite{R593} and proved in \cite{R558}.
We included for completeness a proof, which  is shorter than the one in
\cite{R558}.

Let us give a few simple examples of applications of Theorem 1.

{\bf Example 1.} Consider the inequality \be\label{e1x}\dot{g}(t)\le
tg-(t+1)^2g^2 -2(t+1)^{-2}. \ee Assume $g\ge 0$. Choose
$\mu(t)=t+1$. Then inequality \eqref{e9} holds if
$$(t+1)[-(t+1)^2(t+1)^{-2}-2(t+1)^{-2}]\le -t-(t+1)^{-1},$$ and
$g(0)\le 1$. Thus, inequality \eqref{e9} holds if
$$-t-1 -2(t+1)^{-1}\le -t -(t+1)^{-1}.$$ This inequality holds obviously.
Therefore, any $g\ge 0$, that satifies inequalities \eqref{e1x}  and
$g(0)\le1$, exists for all $t\ge 0$ and satisfies the estimate
$$0\le g(t)\le \frac {1} {t+1}.$$

In this example the
linearized problem $$\dot{g}(t)= tg -2(t+1)^{-2},\qquad g(0)=g_0,$$
has a unique solution $$g(t)=e^{t^2/2}[g(0)-2\int_0^t
e^{-\frac{s^2}{2}}(s+1)^{-2}ds].$$ This
solution tends to infinity as $t\to \infty$.

{\bf Example 2.} Consider a classical problem \be\label{e2x}
\dot{u}(t)=A(t)u+F(t,u), \qquad u(0)=u_0, \ee where $A(t)$ is a
linear operator in $\R^n$ and $F$ is a nonlinear operator. Assume
that $\Re(A(t)u,u)\le -\gamma (u,u)$, where $\gamma=const>0$, and
$||F(t,u)||\le c||u||^p$, $p=const>1$, $c=const>0$, and $||\cdot||$
is the norm of a vector in $\R^n$. We also assume that equation
\eqref{e2x} has the following property:

{\bf Property P}: {\it If a solution to \eqref{e2x} is defined  on
the maximal interval of its existence $[0,T)$ and $T<\infty$, then
$\lim_{t\to T-0}||u(t)||=\infty$. }

It is known (see, for example, \cite{H}), that  Property P
holds if $F(t,u)$ is a continuous function on $[0,T]\times \R^n$.

 By Peano's theorem the Cauchy problem
\be\label{e2y} \dot{u}(t)=f(t,u),\qquad u(0)=u_0, \ee where $u\in
\R^n$, has a local solution on an interval $[0,a)$, provided that
$f$ is a continuous function on $[0,T]\times D(u_0)$, where $a\in
(0,T)$ and $D(u_0)$ is a neighborhood of $u_0$.  This solution is
non-unique, in general.   One can give an explicit estimate of the
length $a$ of the interval on which the solution does exist. Namely,
$a=min (T, \frac{b}{M})$, where $M:=max_{|u-u_0|\le b,  t\in [0,T]
}|f(t,u)|$, and the neighborhood $D(u_0)$ is taken to be the set
$\{u:\, |u-u_0|\le b\}$.

It is known that {\it in every infinite-dimensional Banach space the
Peano theorem fails}. Therefore, in an infinite-dimensional Banach
space we assume that problems \eqref{e2x} and \eqref{e2y} have a
solution, and if $[0,T)$ is the maximal interval of the existence of
the solution, then Property P holds. This happens, for example, if
$f(t,u)$ satisfies a local Lipschitz condition with respect to $u$
and is continuous with respect to $t\in [0,T]$.  Indeed, if a local
Lipschitz condition holds, then the local interval of the existence
of the solution to the Cauchy problem \eqref{e2y} is of the length
$b=min (RM^{-1}, L)$, provided that $f$ is continuous with respect
to $t$ and satisfies the estimates $||f(t,u)||\le M$,
$||f(t,u)-f(t,v)||\le L||u-v||$, in the region $[0,T]\times
B(u_0,R)$, $B(u_0,R):=\{u: ||u-u_0||\le R\}$. Under these
assumptions the solution to problem \eqref{e2y} is unique and stays
in the ball $B(u_0,R)$ for $t\in [0,b]$.

 To see that Property P holds for problem \eqref{e2y} if $f$ satisfies a
local Lipschitz condition with respect to $u$, assume that the
solution to \eqref{e2y} does not exist for $t>T$. Under our
assumptions, if the solution $u$ of problem \eqref{e2y} satisfies
the inequality $\sup_{0\le t< T} ||u(t)||<\infty$, then the
constants $M, L$ and $R$ are finite. Therefore $b>0$. Take the
initial point $t_0=T-0.5 b$. By the local existence theorem the
solution $u(t)$ exists on the interval $[T-0.5b,T+0.5b]$. This is a
contradiction, since we have assumed that this solution does not
exist for $t>T$. This contradiction proves that Property P holds for
problem \eqref{e2y} if $f$ satisfies a local Lipschitz condition.

 Let us use Theorem 1 to prove asymptotic stability of
the zero solution to \eqref{e2x} and to illustrate the application
of our general method for a study of stability of solutions to
abstract evolution problems, the method that we develop below.

Let $g(t):=||u(t)||$, where the norm is taken in $\R^n$. Take a dot
product of equation \eqref{e2x} with $u$, then take the real part of
both sides of the resulting equation and get
$$\Re(\dot{u},u)=g\dot{g}=\Re (Au,u)+\Re (F(t,u),u)\le -\gamma g^2+cg^{p+1}.$$
Since $g\ge 0$, one obtains from the above inequality  an inequality
of the type \eqref{e7}, namely,
$$\dot{g}(t)\le -\gamma g(t)+cg^p(t), \qquad p=const>1,$$
where $\gamma$ and $c$ are positive constants.
Choose $$\mu(t)=\lambda e^{at},$$ where $\lambda=const>0$, $a=const\in
(0,\gamma)$. Note that $a$ can be chosen arbitrarily close to $\gamma$.
We choose $\lambda$ later. Denote $b:=\gamma -a>0$. Then
inequality
\eqref{e10} holds for any $g(0)$ if $c>0$ is sufficiently small.
Inequality \eqref{e9} holds if $$c\lambda^{-(p-1)}e^{-(p-1)at}\le
\gamma-a=b.$$
Since $p>1$ this inequality holds if $c\lambda^{-(p-1)}<b$.
In turn, the last inequality holds for an arbitrary fixed $c>0$
and an arbitrary small fixed $b>$ provided that $\lambda>0$ is
sufficiently large.

One concludes that {\it for any initial data $u_0$ the solution to
\eqref{e2x} exists globally and admits an estimate $||u(t)||\le
\lambda^{-1}e^{-at}$, where the positive constant $a<\gamma$ can be
chosen arbitrarily close to $\gamma$ if the positive constant $c$ is
sufficiently small.}

{\it The above argument remains valid also for  unbounded, closed, densely
defined linear operators $A(t)$, provided that Property P holds.}

If $A(t)$ is a generator of a $C_0$ semigroup $T(t)$, and $F$
satisfies a local  Lipschitz condition, then problem \eqref{e2x} is
equivalent to the equation $u=T(t)F(t,u)$, and this equation
 may be useful for
a study of the global existence of the solution to problem
\eqref{e2x} (see \cite{P}).

{\bf Example 3.} Consider an example in which {\it the solution
blows up in a finite time}, so it does not exist globally. Consider
the problem \be\label{e3x}\dot{u}-\Delta u=u^2 \quad  in \quad
[0,\infty)\times D\subset \R^n;\quad u_N=0; \quad u(0,x)=u_0(x).\ee
 Here $D$ is a bounded domain with a smooth boundary $S$,
$N$ is an outer unit normal to $S$, $u_0>0$ is a smooth function.
Let
$$g_0:=\int_Du_0(x)dx, \qquad g(t):=\int_Du(t,x)dx.$$ Integrate equation
 \eqref{e3x} over $D$ and get $\dot{g}(t)=\int_Du^2dx$. Use the
inequality $$\big(\int_Dudt\big)^2\le c\int_Du^2dx,$$ where
$c=c(D)=const>0$, and get $\dot{g}\ge g^2/c$. Integrating this
inequality, one obtains $g(t)\ge [\frac 1{g_0}-ct]^{-1}$. Since
$c>0$ and $g_0>0$ it follows that
$$\lim_{t\to t_b}g(t)=\infty,$$
where $t_b:=\frac 1{cg_0}$ is the blow-up time, and $t<t_b$.
Consequently, {\it for any initial data with $g_0>0$ the solution to
\eqref{e3x} does not exist globally.}

{\bf Example 4.} Consider the following equation \be\label{e4x}
\dot{u}+A(t)u +\phi(u)-\psi(u)=f(t,u), \,\, u(0,x)=u_0(x),\ee
 where $u=u(t,x)$, $\phi$ and $\psi (t,u)$ are smooth
functions growing to infinity as $|u|\to \infty$. Let us assume that
$$u\phi(u)\ge 0, \qquad u\psi(t, u)\ge 0 \quad \forall t\ge 0,$$
and
$$u\psi(t,u)\le \alpha(t)|u|^3, \qquad |uf(t,u)|\le \beta(t)|u|, $$
where $\alpha(t)>0$  and $\beta(t)>0$ are continuous functions,
 $x\in D\subset \R^n$,
$D$ is a bounded
domain,
$$\Re (Au,u)\ge \gamma (u,u)\quad \forall u\in D(A), \quad \gamma=const>0,$$
$A$ is an operator in a Hilbert space $H=L^2(D)$, the domain of
definition of $A$, $D(A)$, is a dense in $H$ linear set, $(u,v)$ is
an inner product in $H$, $||u||^2=(u,u)$. An example of $A$ is
$A=-\Delta,$ the Laplacean with the Dirichlet boundary condition on
$S$, the boundary of $D$. Denote $g(t):=||u(t)||$. We want to
estimate the large time behavior of the solution $u$ to \eqref{e4x}.

 Take the inner product in $H$ of \eqref{e4x} and $u$, then take
real part of both sides of the resulting equation and get
$$g\dot{g}\le -\gamma g^2+\alpha g^3 +\beta g.$$ Since $g\ge 0$ one
obtains an inequality of the type \eqref{e7}, namely
$$\dot{g}\le -\gamma g+\alpha(t) g^2 +\beta.$$
Now it is possible to  use Theorem 1.

 Choose $\mu(t)=\lambda
e^{kt}$, where
$\lambda$ and $k$ are positive constants, $k<\gamma$. Assume that
$\lambda g_0\le 1$, where $g_0:=||u_0(x)||$. Then inequality
\eqref{e10} holds for any initial data $u_0$, that is, for any
$g_0$, if $\lambda$ is sufficiently small. Inequality \eqref{e9}
holds if $$\frac{\alpha(t)e^{-kt}}{\lambda}+\lambda e^{kt}\beta(t)\le
\gamma-k.$$ One can easily impose various conditions on $\alpha$ and
$\beta$ so that the above inequality hold. For example, assume that
$\alpha$ decays monotonically as $t$ grows,
$\frac{\alpha(0)}{\lambda}<(\gamma-k)/2$, and $\beta(t)\le \nu e^{-k't}$,
where $k'>k$, $k'=const$, $\nu>0$ is a constant, $\lambda
\nu\le(\gamma-k)/2$. Then inequality \eqref{e9} holds, and it implies
that $$||u(t)||\le \frac{e^{-kt}}{\lambda},$$ so that the exponential decay
of $||u(t)||$ as $t\to \infty$ is established.

In Sections 3 and 4 some stability results for abstract evolution problems
are presented in detail. These results are formulated in four theorems.
The basic ideas are similar to the ones discussed in examples in this
Section, but new assumptions and new technical tools are used.

\section{Stability results }

In this Section we develop a method for a study of stability
of solutions to the evolution problems described by the Cauchy
problem \eqref{e1}-\eqref{e2} for abstract differential equations
with a dissipative bounded  linear operator $A(t)$ and a nonlinearity
$F(t,u)$ satisfying inequality \eqref{e4}. Condition \eqref{e4} means that
for
sufficiently small $\|u(t)\|$ the nonlinearity is of the higher
order of smallness than $\|u(t)\|$. We also study the large time behavior
of the solution to problem \eqref{e1}-\eqref{e2} with
persistently acting perturbations $b(t)$.

In this paper we assume that
$A(t)$ is a bounded linear dissipative operator, but our methods are
valid also for unbounded linear dissipative operators $A(t)$,
for which one can prove global existence of the solution to
problem \eqref{e1}-\eqref{e2}. We do not go into further detail
in this paper.

Let us formulate the first stability result.

{\bf Theorem 3.} {\it Assume that Re$(Au,u)\le -k\|u\|^2$
$\forall u\in H$, $k=const>0$, and inequality \eqref{e3} holds with
$\gamma(t)=k$.
Then the solution
to problem \eqref{e1}-\eqref{e2} with $b(t)=0$ satisfies an esimate
$\|u(t)\|=O(e^{-(k-\epsilon)t})$ as $t\to \infty$. Here
$0<\epsilon< k$ can be chosen arbitrarily small if
$\|u_0\|$ is sufficiently small.}

This theorem implies asymptotic stability in the sense of Lyapunov of
the zero solution to equation \eqref{e1} with $b(t)=0$. Our proof of
Theorem 3 is new and very short.

{\it Proof of Theorem 3}.
Multiply equation \eqref{e1} (in which $b(t)=0$ is assumed) by $u$, denote
$g=g(t):=\|u(t)\|$, take the real part, and use  assumption
\eqref{e3} with $\gamma(t)=k>0$, to get
\begin{equation}
\label{e25}
g\dot{g}\le -kg^2+c_0g^{p+1}, \qquad p>1.
\end{equation}
If $g(t)>0$ then the derivative $\dot{g}$ does exist, and
$$\dot{g}(t)=Re  \left(\dot{u}(t), \frac {u(t)}{\|u(t)\|}\right),$$
as one can check.
If $g(t)=0$ on an open subset of $\R_+$, then
the derivative $\dot{g}$ does exist on this subset and $\dot{g}(t)=0$
on this subset. If $g(t)=0$ but in in any neighborhood $(t-\delta,
t+\delta)$ there are points at which $g$ does not vanish,
then by $\dot{g}$ we understand the derivative from the right,
that is,
$$\dot{g}(t):= \lim_{s\to +0}\frac {g(t+s)-g(t)}{s}=\lim_{s\to +0}\frac
{g(t+s)}{s}.$$
This limit does exist and is equal to $\|\dot{u}(t)\|$.
Indeed, the function $u(t)$ is continuously differentiable,
so
$$\lim_{s\to +0}\frac {\|u(t+s)\|}{s}=\lim_{s\to +0}
\frac{\|s\dot{u}(t)+o(s)\|}{s}=\|\dot{u}(t)\|.$$
The assumption about the existence of the bounded derivative $\dot{g}(t)$
from the right in Theorem 3 was made because the function $\|u(t)\|$
does not have, in general, the derivative in the usual sense at the points
$t$ at which $\|u(t)\|=0$, no matter how smooth the function $u(t)$
is at the point $\tau$. Indeed,
$$\lim_{s\to -0}\frac {\|u(t+s)\|}{s}=\lim_{s\to -0}
\frac{\|s\dot{u}(t)+o(s)\|}{s}=-\|\dot{u}(t)\|,$$
because $\lim_{s\to -0}\frac {|s|}{s}=-1$. Consequently,
the right and left derivatives of $\|u(t)\|$ at the point $t$
at which $\|u(t)\|=0$ do exist, but are different. Therefore,
the derivative of $\|u(t)\|$ at the point $t$
at which $\|u(t)\|=0$ does not exist in the usual sense.

However, as we have proved above, the derivative
$\dot{g}(t)$ from the right does exist always, provided that $u(t)$ is
continuously
differentiable at the point $t$.

Since $g\ge 0$, inequality \eqref{e25}  yields inequality \eqref{e7}
with
$\gamma(t)=k>0$, $\beta(t)=0$, and $\alpha(t,g)=c_0 g^p$, $p>1$.
Inequality \eqref{e9} takes the form
\begin{equation}
\label{e26}
\frac {c_0}{\mu^p(t)}\leq \frac 1 {\mu(t)}\left(k-\frac
{\dot{\mu}(t)}{\mu(t)} \right), \qquad  \forall t\ge 0.
\end{equation}
Let
\begin{equation}
\label{e27}
\mu(t)=\lambda e^{bt}, \qquad \lambda,b=const>0.
\end{equation}
We choose the constants $\lambda$ and $b$ later.
Inequality \eqref{e9}, with $\mu$ defined in \eqref{e27}, takes the form
\begin{equation}
\label{e28}
\frac {c_0}{\lambda^{p-1}e^{(p-1)bt}} +b\leq k, \qquad \forall
t\ge 0.
\end{equation}
This inequality holds if it holds at $t=0$, that is, if
\begin{equation}
\label{e29}
\frac {c_0}{\lambda^{p-1}} +b\leq k.
\end{equation}
Let $\epsilon>0$ be arbitrary small number. Choose
$b=k-\epsilon>0$. Then  \eqref{e29} holds if
\begin{equation}
\label{e30}
\lambda\geq \big(\frac{c_0}{\epsilon} \big)^{\frac 1 {p-1}}.
\end{equation}
Condition \eqref{e10} holds if
\begin{equation}
\label{e31}
\|u_0\|=g(0)\le \frac 1 {\lambda}.
\end{equation}
We choose $\lambda$ and $b$ so that inequalities \eqref{e30}
and \eqref{e31} hold. This is always possible if $b<k$ and $\|u_0\|$
is sufficiently small.

By Theorem 1, if  inequalities  \eqref{e29}-\eqref{e31} hold, then
one gets estimate  \eqref{e11}:
\begin{equation}
\label{e32}
0\le g(t)=\|u(t)\|\le \frac {e^{-(k-\epsilon)t}} {\lambda},\qquad
\forall t\ge 0.
\end{equation}
Theorem 3 is proved. \hfill $\Box$

{\bf Remark 3.} {\it One can formulate the result differently.
Namely, choose  $\lambda=\|u_0\|^{-1}$. Then inequality \eqref{e31}
holds, and becomes an equality.
Substitute this $\lambda$ into \eqref{e29} and get
$$c_0\|u_0\|^{p-1}+b\leq k.$$
Since the choice of the constant $b>0$ is
at our disposal, this inequality can always be satisfied if
$c_0\|u_0\|^{p-1}<k$.
Therefore, condition
$$c_0\|u_0\|^{p-1}<k$$
is a sufficient condition for
the estimate
$$\|u(t)\|\le \|u_0\|e^{-(k-c_0\|u_0\|^{p-1})t},$$
to hold (assuming  that $c_0\|u_0\|^{p-1}<k$).
}

Let us formulate the second stability result.

{\bf Theorem 4.} {\it Assume that inequalities \eqref{e3}-\eqref{e5}
hold and
\begin{equation}
\label{e33}
\gamma(t)=\frac {c_1}{(1+t)^{q_1}}, \quad q_1\le 1; \quad c_1,q_1=const>0.
\end{equation}
Suppose that $\epsilon\in (0,c_1)$ is an arbitrary small fixed number,
$$\lambda\ge \left(\frac{c_0}{\epsilon}\right)^{1/(p-1)} \quad \text{
and}\quad \|u(0)\|\le \frac {1}{\lambda}.$$
Then the unique solution to \eqref{e1}-\eqref{e2} with $b(t)=0$
exists on all of $\R_+$ and
\begin{equation}
\label{e34}
0\le \|u(t)\|\le \frac {1}{\lambda(1+t)^{c_1-\epsilon}},\qquad \forall
t\ge 0.
\end{equation}
}

Theorem 4 gives the size of the initial data, namely, $\|u(0)\|\le \frac
{1}{\lambda}$, for which estimate \eqref{e34} holds. For a fixed
nonlinearity $F(t,u)$, that is, for a fixed constant $c_0$ from
assumption \eqref{e4}, the maximal size of $\|u(0)\|$ is determined by the
minimal size of $\lambda$.

The minimal size of $\lambda$ is determined by the inequality
$\lambda\ge \left(\frac{c_0}{\epsilon}\right)^{1/(p-1)}$, that is, by
the maximal size of $\epsilon\in (0,c_1)$.  If $\epsilon<c_1$ and
$c_1-\epsilon$ is very small, then
$\lambda>\lambda_{min}:= \left(\frac{c_0}{c_1}\right)^{1/(p-1)}$
and $\lambda$ can be chosen very close to $\lambda_{min}$.

{\it Proof of Theorem 4.}
Let
\begin{equation}
\label{e35}
\mu(t)=\lambda (1+t)^\nu, \qquad \lambda, \nu=const>0.
\end{equation}
We will choose the constants $\lambda$ and $\nu$ later.
Inequality \eqref{e9} (with $\beta(t)=0$) holds if
\begin{equation}
\label{e36}
\frac{c_0}{\lambda^{p-1} (1+t)^{(p-1)\nu}} +\frac {\nu}{1+t}\le \frac
{c_1}{(1+t)^{q_1}}, \qquad \forall t\ge 0.
\end{equation}
If
\begin{equation}
\label{e37}
q_1\le 1\qquad and  \qquad (p-1)\nu\ge q_1,
\end{equation}
then inequality \eqref{e36} holds if
\begin{equation}
\label{e38}
\frac {c_0}{\lambda^{p-1}} +\nu\le c_1.
\end{equation}
Let $\epsilon>0$ be an arbitrary small number. Choose
\begin{equation}
\label{e39}
\nu= c_1-\epsilon.
\end{equation}
Then inequality \eqref{e38}  holds if inequality \eqref{e30} holds.
Inequality \eqref{e10}
holds because we have assumed in Theorem 4 that $\|u(0)\|\le \frac 1
\lambda$.
Combining inequalities  \eqref{e30}, \eqref{e31} and \eqref{e11}, one
obtains the desired estimate:
\begin{equation}
\label{e40}
0\le \|u(t)\|=g(t)\le \frac 1 {\lambda (1+t)^{c_1 -\epsilon}}, \qquad
\forall t\ge 0.
\end{equation}
Condition  \eqref{e30} holds for any fixed small $\epsilon>0$ if
$\lambda$ is sufficiently large. Condition  \eqref{e31} holds for any
fixed large $\lambda$ if $\|u_0\|$ is sufficiently small.

Theorem 4 is proved. \hfill $\Box$

Let us formulate a stability result in which we assume that
$b(t)\not\equiv 0$.
The function $b(t)$ has physical meaning of persistently acting
perturbations.

{\bf Theorem 5.} {\it Let $b(t)\not\equiv 0$, conditions \eqref{e3}-
\eqref{e5} and \eqref{e33} hold, and
\begin{equation}
\label{e41}
\beta(t)\le \frac {c_2}{(1+t)^{q_2}},
\end{equation}
where $c_2>0$ and $q_2>0$ are constants.
Assume that
\begin{equation}
\label{e42}
q_1\le \min\{1, q_2-\nu, \nu(p-1)\},\qquad \|u(0)\|\le \lambda_0^{-1},
\end{equation}
where $\lambda_0>0$ is a constant defined in \eqref{e49}, and
\begin{equation}
\label{e43}
c_2^{1-\frac 1 p}c_0^{\frac 1 p}(p-1)^{\frac 1 p}\frac p{p-1} +\nu\le c_1.
\end{equation}
Then problem \eqref{e1}-\eqref{e2} has a unique global solution $u(t)$,
and the following  estimate holds:
\begin{equation}
\label{e44}
\|u(t)\|\le \frac 1 {\lambda_0 (1+t)^\nu}, \qquad \forall t\ge 0.
\end{equation}
}

{\it Proof of Theorem 5.} Let $g(t):=\|u(t)\|$. As in the proof
of Theorem 4, multiply \eqref{e1} by $u$, take the real part, use the
assumptions of Theorem 5, and get the inequality:
\begin{equation}
\label{e45}
\dot{g}\le -\frac {c_1}{(1+t)^{q_1}}g +c_0g^p+\frac {c_2}{(1+t)^{q_2}}.
\end{equation}
Choose $\mu(t)$ by formula \eqref{e35}. Apply Theorem 1 to
inequality \eqref{e45}. Condition \eqref{e9} takes now the form
\begin{equation}
\label{e46}
\frac {c_0}{\lambda^{p-1} (1+t)^{(p-1)\nu}}+ \frac
{\lambda c_2}{(1+t)^{q_2-\nu}}+ \frac \nu{1+t}\le \frac
{c_1}{(1+t)^{q_1}}\quad \forall t\ge 0.
\end{equation}
If assumption \eqref{e42} holds, then inequality \eqref{e46} holds
provided that it holds for $t=0$, that is, provided that
\begin{equation}
\label{e47}
\frac {c_0}{\lambda^{p-1}}+ \lambda c_2+ \nu \le c_1.
\end{equation}
Condition  \eqref{e10} holds if
\begin{equation}
\label{e48}
g(0)\leq \frac 1 \lambda.
\end{equation}
The function  $h(\lambda):=\frac {c_0}{\lambda^{p-1}}+ \lambda
c_2$ attains its global minimum in the interval $[0,\infty)$
at the value
\begin{equation}
\label{e49}
\lambda=\lambda_0:=\left(\frac {(p-1)c_0}{c_2}\right)^{1/p},
\ee
and this minimum
is equal to
$$h_{min}=c_0^{\frac 1 p}c_2^{1-\frac 1 p}(p-1)^{\frac 1
p}\frac p{p-1}.$$
Thus, substituting $\lambda=\lambda_0$ in formula \eqref{e47},
one concludes that inequality \eqref{e47} holds if
the following inequality holds:
\begin{equation}
\label{e50}
c_0^{\frac 1 p}c_2^{1-\frac 1 p}(p-1)^{\frac 1 p}\frac p{p-1}+\nu\le c_1,
\end{equation}
while inequality \eqref{e48} holds if
\begin{equation}
\label{e51}
\|u(0)\|\le \frac 1 {\lambda_0}.
\end{equation}
Therefore, by Theorem 1, if conditions \eqref{e50}-\eqref{e51} hold, then
estimate \eqref{e11}
yields
\begin{equation}
\label{e52}
\|u(t)\|\le \frac 1 {\lambda_0 (1+t)^\nu}, \qquad \forall t\ge 0,
\end{equation}
where $\lambda_0$ is defined in \eqref{e49}.

Theorem 5 is proved. \hfill $\Box$

\section{Stability results under non-classical assumptions }

Let us assume that Re$(A(t)u,u)\le \gamma(t)\|u\|^2$, where $\gamma(t)>0$.
This corresponds to the case when the linear operator $A(t)$ may have
spectrum in the right half-plane Re$z>0$. Our goal is to derive under this
assumption sufficient conditions on $\gamma(t)$, $\alpha(t,g)$, and
$\beta(t)$, under which the solution to problem \eqref{e1} is bounded as
$t\to \infty$, and stable. We want to demonstrate new methodology, based
on Theorem 1. By this reason we restrict ourselves to a derivation of the
simplest results under simplifying assumptions. However, our derivation
illustrates the method applicable in many other problems.

Our assumptions in this Section are:
$$\beta(t)=0,\quad
\gamma(t)=c_1(1+t)^{-m_1}, \quad \alpha(t,g)=c_2(1+t)^{-m_2}g^p,
\,\,p>1.$$
Let us choose
$$\mu(t)=d+\lambda(1+t)^{-n}.$$
The constants $c_j, m_j, \lambda,
d, n,$ are assumed positive.

We want to show that a suitable choice of these parameters allows one to
check that basic inequality \eqref{e9} for $\mu$ is satisfied,
and, therefore, to obtain inequality \eqref{e11} for $g(t)$.
This inequality allows one to derive global boundedness of the solution to
\eqref{e1}, and the Lyapunov stability of the zero solution to
\eqref{e1} (with $u_0=0$). Note that under our assumptions
$\dot{\mu}<0$, $\lim_{t \to \infty}\mu(t)=d$. We choose $\lambda=d$.
Then $(2d)^{-1}\le \mu^{-1}(t)\le d^{-1}$ for all $t\ge 0$.
The basic inequality  \eqref{e9} takes the form
\begin{equation}
\label{e53}
c_1(1+t)^{-m_1}+c_2(1+t)^{-m_2}[d+\lambda(1+t)^{-n}]^{-p+1}\le
n\lambda (1+t)^{-n-1}[d+\lambda(1+t)^{-n}]^{-1},
\end{equation}
and
\begin{equation}
\label{e54}
g_0(d+\lambda)\le 1.
\end{equation}
Since we have chosen $\lambda=d$, condition \eqref{e54}
is satisfied if
\begin{equation}
\label{e55}
d=(2g_0)^{-1}.
\end{equation}
Choose $n$ so that
\begin{equation}
\label{e56}
n+1\leq \min\{m_1, m_2\}.
\end{equation}
Then \eqref{e53} holds if
\begin{equation}
\label{e57}
c_1+c_2d^{-p+1}\le n\lambda d^{-1}.
\end{equation}
Inequality \eqref{e57} is satisfied if $c_1$ and $c_2$ are sufficiently
small.
Let us formulate our result, which folows from Theorem 1.

{\bf Theorem 6}. {\it If inequalities  \eqref{e57} and  \eqref{e56} hold,
then
\begin{equation}
\label{e58}
0\le g(t)\le [d+\lambda(1+t)^{-n}]^{-1}\le d^{-1}, \qquad \forall t\ge 0.
\end{equation}
}
Estimate \eqref{e58} proves global boundedness of the solution $u(t)$,
and implies Lyapunov stability of the zero solution to problem
 \eqref{e1} with $b(t)=0$ and $u_0=0$.

Indeed, by the definition
of Lyapunov stability of the zero solution, one should check that
for an arbitrary small fixed $\epsilon>0$ estimate $\sup_{t\ge
0}\|u(t)\|\le \epsilon$  holds provided that $\|u(0)\|$ is sufficiently
small. Let $\|u(0)\|=g_0=\delta$. Then estimate \eqref{e58}
yields $\sup_{t\ge 0}\|u(t)\|\le d^{-1}$, and \eqref{e55}
implies  $\sup_{t\ge 0}\|u(t)\|\le 2\delta$. So,
$\epsilon=2\delta$, and the Lyapunov stability is proved. \hfill $\Box$


\end{document}